\theoremstyle{definition}
\newtheorem{Def}{Definition}[section]
\theoremstyle{plain}
\newtheorem{Pro}[Def]{Proposition}
\newtheorem{Lem}[Def]{Lemma}
\newcommand{\R}{\mathbb{R}}
\newcommand{\Q}{\mathbb{Q}}
\newcommand{\Z}{\mathbb{Z}}
\newcommand{\N}{\mathbb{N}}
\title{The Thomae Function: Fractal Insights}
\author{T. Lamby\footnote{University of Luxembourg, Department of Mathematics, Maison du Nombre, 6, avenue de la Fonte, L-4364 Esch-sur-Alzette. Orcid ID: 0000-0001-6549-0678. thomas.lamby@uni.lu}, Samuel Nicolay\footnote{Universit\'e de Li\`ege, D\'epartement de math\'ematique -- zone Polytech 1, 12 all\'ee de la D\'ecouverte, B\^at. B37, B-4000 Li\`ege. Orcid ID: 0000-0003-0549-0566. S.Nicolay@uliege.be}}
\begin{document}

\maketitle

\begin{abstract}
This article examines the Thomae function, a paradigmatic example of a function that is continuous on the irrationals and discontinuous elsewhere. Defined for a parameter $\theta>0$, it exhibits a rich self-similar structure and intriguing regularity properties. After revisiting its fundamental characteristics, we analyze its H\"older continuity, emphasizing the interplay between its discrete spikes and its behavior on dense subsets of the real line. This study provides a refined perspective on the irregularity of the Thomae function, using classical analytical tools to elucidate its fractal nature.
\end{abstract}
\noindent \textit{Keywords}: Thomae function; H\"older continuity

\noindent  \textit{2020 MSC}:26A15;26A16;26A30
\section{Introduction}
The Thomae function - also known as the popcorn function among other names - has long served as a striking example in real analysis, illustrating the delicate interplay between continuity and discontinuity. Explicitly introduced by Thomae in 1875 \cite{Thomae} as a pedagogical example in the context of the formalisation of continuity, this function is continuous at every irrational point and discontinuous at every rational one. Since it is continuous almost everywhere (with respect to Lebesgue measure), the function is Riemann integrable, with vanishing integral. It also provides a neat illustration of Blumberg’s theorem, which asserts that for any function $f:\R\to \R$, there exists a dense subset of $\R$ on which $f$ is continuous \cite{Blumberg}.

Let us now introduce a slight generalisation of this function, preserving its essential properties. Denote by by $p\wedge q$ the greatest common divisor of two integers $p$ and $q$; thus $p\wedge q=1$ indicates that $p$ and $q$ are coprime. Unless otherwise specified, any rational number $x$ written as $x=p/q$ with $p\in \Z$ and $q\in \N$ will be assumed to satisfy $p\wedge q=1$. The Thomae function can then be defined as
\[
 f_\theta (x) =
 \left\{\begin{tabular}{ll}
  $1$ & if $x=0$ \\
  $q^{-\theta}$ & if $x$ is rational with $x=p/q$ \\
  $0$ & if $x$ is irrational
 \end{tabular}\right.,
\]
with $\theta=1$. The limiting case $\theta=0$ corresponds to the Dirichlet function, whereas for $\theta < 0$, the function fails to be locally bounded anywhere. For $\theta>0$, it exhibits the remarkable property of being continuous on the irrational numbers while discontinuous at every rational point. This duality, together with its quasi self-similar structure, makes the Thomae function a paradigmatic object in the study of irregular behavior in real analysis. In what follows, we focus on the case $\theta>0$; more general variants are discussed in \cite{Beanland}.
\begin{figure}
\begin{center}
\includegraphics[scale=.25]{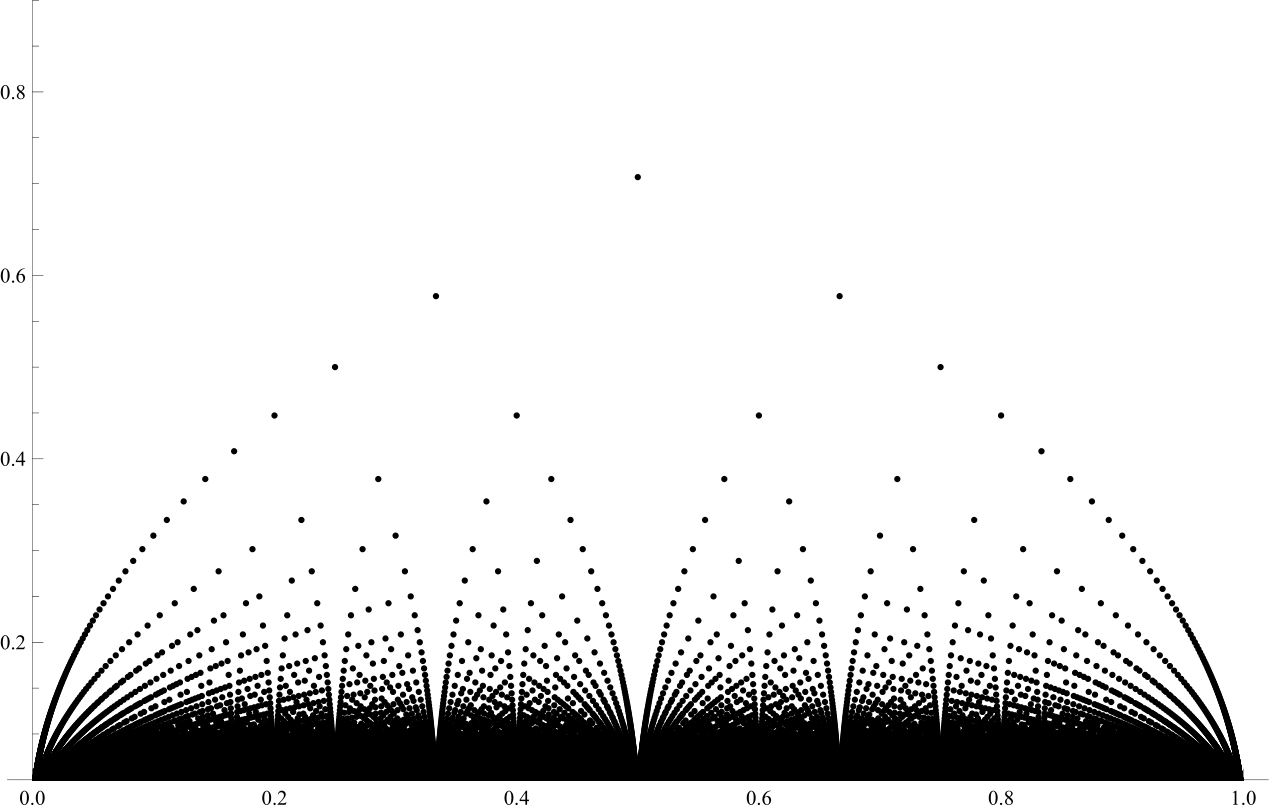}
\includegraphics[scale=.25]{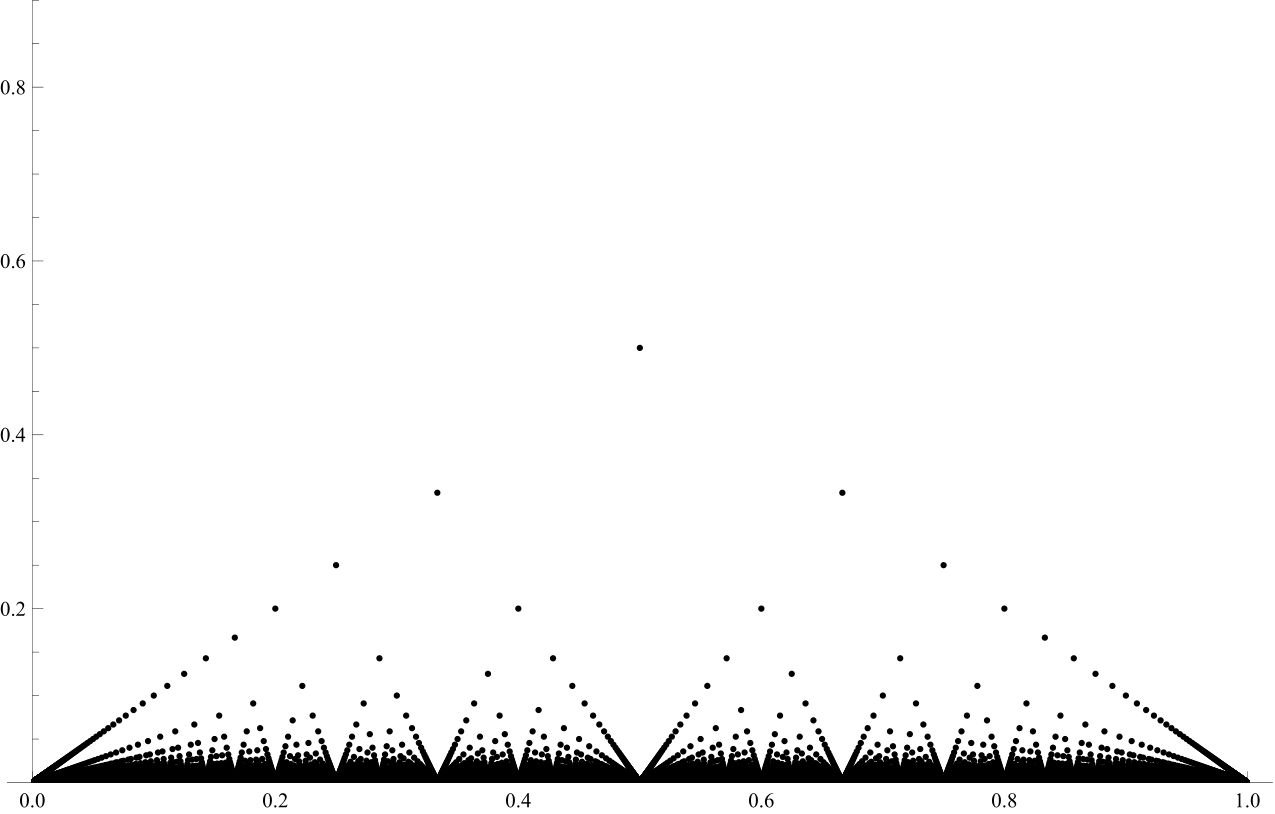}
\includegraphics[scale=.25]{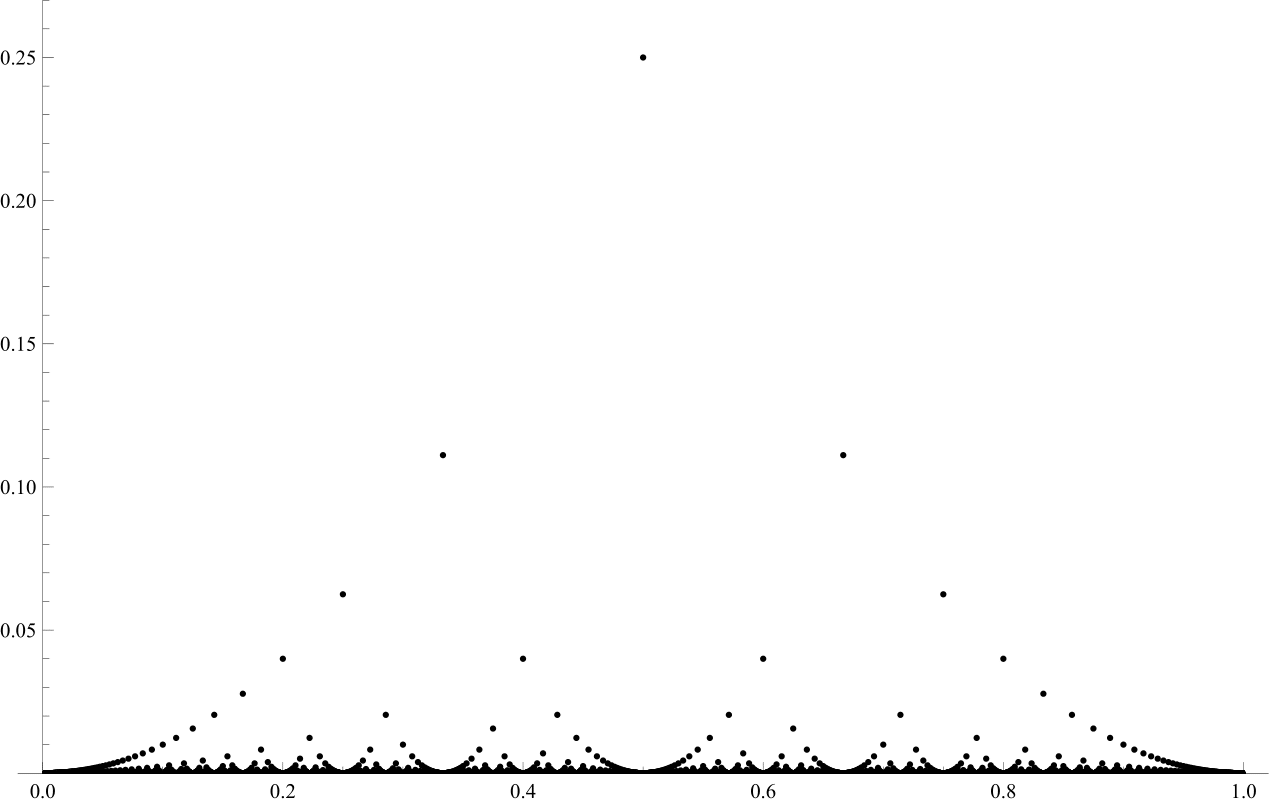}
\end{center}
\caption{Representation of the function $f_\theta$ on $(0,1)$ for $\theta=1/2$, $1$ and $2$.}
\end{figure}

Beyond its classical role in real analysis, the Thomae function has found relevance in broader mathematical and applied contexts. Recent studies have highlighted analogies between its spiked structure and distributions observed in empirical datasets, particularly in biology and clinical research \cite{Trifonov}.

This article focuses on the H\"older regularity of the Thomae function, a key aspect of its behavior. The first section reviews its fundamental properties, offering a detailed account of its defining characteristics and self-similar nature. The subsequent discussion delves into the exponent of irrationality, providing further insight into the function's intricate behavior. In the fourth section, we analyze the function's H\"older regularity, uncovering insights into its fractal-like properties through contemporary mathematical tools. By bridging its classical foundations with these contemporary perspectives, we aim to highlight both the theoretical elegance and the deeper structural nuances of this remarkable function.

\section{Fundamental properties}
We begin by recalling some well-known properties of the Thomae function.

The Thomae function is periodic with period $1$.
\begin{Pro}
For any $x\in \R$, $f_\theta(x+1)=f_\theta(x)$.
\end{Pro}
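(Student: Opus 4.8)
The plan is to establish periodicity by a direct case analysis that tracks how the defining data of $f_\theta$ transform under the shift $x \mapsto x+1$. The function is defined piecewise according to whether its argument is zero, a nonzero rational in lowest terms, or irrational, so I would verify that each of these three cases is preserved under translation by $1$ and that the assigned value is unchanged.

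First I would dispose of the irrational case: if $x$ is irrational then $x+1$ is also irrational, since $\Z$ is closed under addition and a rational shift of an irrational number remains irrational; hence $f_\theta(x+1) = 0 = f_\theta(x)$. Next I would treat the rational case, which is the heart of the argument. Writing $x = p/q$ with $p \in \Z$, $q \in \N$, and $p \wedge q = 1$, I would observe that $x+1 = (p+q)/q$. The key point to check is that this representation is again in lowest terms, i.e. $(p+q)\wedge q = 1$: any common divisor of $p+q$ and $q$ divides their difference $p$, hence divides $\gcd(p,q)=1$. Therefore the denominator of $x+1$ in lowest terms is again $q$, so $f_\theta(x+1) = q^{-\theta} = f_\theta(x)$. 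The only subtlety is the special role of $0$ in the definition: I must confirm that $x=0$ maps to $x+1=1$ consistently. Indeed $f_\theta(0)=1$ by definition, while $1 = 1/1$ gives denominator $q=1$ and value $1^{-\theta}=1$, so the two prescriptions agree and no inconsistency arises at the boundary between the ``$x=0$'' clause and the ``$x$ rational'' clause.

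I expect the main (though mild) obstacle to be exactly this bookkeeping around the denominator: ensuring that passing from $p/q$ to $(p+q)/q$ does not secretly reduce the fraction and thereby change the value. The $\gcd$ argument above resolves it cleanly, and it is worth noting that the same computation works in reverse for $x-1 = (p-q)/q$, which would also give the inverse implication and confirm that translation by $1$ is a genuine symmetry rather than a one-sided collapse. Since all three cases yield $f_\theta(x+1)=f_\theta(x)$ and they exhaust $\R$, the proposition follows. No estimation or limiting argument is needed here; the entire proof is elementary number theory applied to the piecewise definition.
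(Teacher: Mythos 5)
Your proof is correct and follows essentially the same route as the paper's: the irrational case by closure under integer shifts, and the rational case via the observation that $(p+q)\wedge q = p\wedge q = 1$, so the denominator is preserved. Your extra check that the $x=0$ clause is consistent with the value at $1=1/1$ is a harmless refinement the paper omits.
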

\begin{proof}
If $x$ is irrational, so is $x+1$ and therefore $f_\theta(x+1)=f_\theta(x)=0$.

If $x=p/q$, then $x+1=\frac{p+q}{q}$. Since a number divides $p$ and $q$ if and only if it divides $p+q$ and $q$, the condition $p\wedge q=1$ implies $(p+q)\wedge q=1$. Thus, for such a rational $x$, $f_\theta(x+1)=f_\theta(x)=q^{-\theta}$.
\end{proof}

Regarding continuity, the following result holds.
\begin{Pro}
The function $f_\theta$ is discontinuous at rational points and continuous at irrational points.
\end{Pro}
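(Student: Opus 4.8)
The plan is to prove the two claims separately, since they require opposite strategies.

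For discontinuity at rationals, I would take $x = p/q$ with $q \geq 1$, so $f_\theta(x) = q^{-\theta} > 0$. The idea is to exhibit a sequence of irrationals converging to $x$ on which $f_\theta$ vanishes, thereby showing the function cannot be continuous there. Concretely, I would set $x_n = x + \alpha/n$ for some fixed irrational $\alpha$ (for instance $\sqrt{2}$); then each $x_n$ is irrational, $x_n \to x$ as $n \to \infty$, yet $f_\theta(x_n) = 0$ for all $n$. Since $\lim_{n\to\infty} f_\theta(x_n) = 0 \neq q^{-\theta} = f_\theta(x)$, continuity fails. This step is routine.

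For continuity at an irrational point $x$, the plan is an $\varepsilon$--$\delta$ argument exploiting the fact that rationals with small denominator are sparse. Fix $\varepsilon > 0$ and choose $N \in \N$ large enough that $N^{-\theta} < \varepsilon$; this is possible since $\theta > 0$. The key observation is that within any bounded interval, say of length $1$ around $x$, there are only finitely many rationals $p/q$ (in lowest terms) with denominator $q \leq N$, because for each such $q$ there are at most finitely many admissible numerators. Since $x$ is irrational, it is distinct from every one of these finitely many rationals, so I can let $\delta > 0$ be smaller than the distance from $x$ to the nearest of them (and smaller than $1$). Then for any $y$ with $|y - x| < \delta$, either $y$ is irrational, giving $f_\theta(y) = 0$, or $y = p/q$ with necessarily $q > N$, giving $f_\theta(y) = q^{-\theta} < N^{-\theta} < \varepsilon$. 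In all cases $|f_\theta(y) - f_\theta(x)| = |f_\theta(y)| < \varepsilon$, establishing continuity.

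The main obstacle is the continuity half, and within it the crucial point to justify carefully is the finiteness claim: that only finitely many rationals of denominator at most $N$ lie in a neighborhood of $x$. This is what lets the strictly positive distance $\delta$ exist, and it is precisely where the hypothesis $\theta > 0$ (needed so that $N^{-\theta} \to 0$) and the irrationality of $x$ (needed so $x$ avoids all small-denominator rationals) both enter. Everything else is a direct verification.
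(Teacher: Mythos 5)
Your proposal is correct and follows essentially the same strategy as the paper: a sequence of irrationals $x + \alpha/n$ to break continuity at rationals, and an $\varepsilon$--$\delta$ argument at irrationals exploiting the sparsity of rationals with denominator at most $N$. The only cosmetic difference is that the paper constructs $\delta$ explicitly via the nearest fractions $m_j/j$ and $(m_j+1)/j$ for each denominator $j \le n$, whereas you obtain a positive $\delta$ from the finiteness of small-denominator rationals in a bounded interval; both are the same idea.
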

\begin{proof}
If $x$ is rational, let $s$ be an irrational number, and define $x_j= x+ s/j$ for $j\in \N$. Clearly $x_j\to x$, but since $x_j$ is irrational for all $j$, $f_\theta(x_j) \not\to f_\theta(x)$.

If $x$ is irrational, assume $x\in (0,1)$. Given $\varepsilon>0$, choose $n\in \N$ be such that $n^{-\theta}<\varepsilon$. For $j\in \{1,\dotsc,n\}$, define $m_j=\sup \{m\in \N_0: m < jx\}$, and set
\[
 \delta_j = \inf \{ |x- \frac{m_j}{j}|, |x- \frac{m_j+1}{j}| \}.
\]
Let $\delta= \inf_{1\le j\le n} \delta_j$. If $y=p/q$ is rational and $y\in (x-\delta, x+\delta)$, then $q>n$, so $f_\theta(y)<n^{-\theta} <\varepsilon$. If $y$ is irrational, $f_\theta(y)=0<\varepsilon$. Thus, $|x-y|< \delta$ implies $|f_\theta(x)-f_\theta(y)|< \varepsilon$, proving that $f_\theta$ is continuous at $x$.
\end{proof}

From the preceding result, $f_\theta$ is not differentiable at rational points. Let us now establish that for $\theta\in (0,2]$ the function is not differentiable at irrational points. A stronger result will be demonstrated in the next section.
\begin{Pro}
For $\theta\in (0,2]$, the function $f$ is not differentiable at any point.
\end{Pro}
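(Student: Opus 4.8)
The plan is to treat the rational and irrational points separately, the former being immediate. At a rational point $f_\theta$ is discontinuous by the preceding proposition, and a discontinuous function cannot be differentiable, so nothing remains to verify there. Fix now an irrational $x$. Since $x$ is irrational we have $f_\theta(x)=0$, so the difference quotient $\bigl(f_\theta(y)-f_\theta(x)\bigr)/(y-x)$ reduces to $f_\theta(y)/(y-x)$. Evaluated along a sequence of irrationals $y_n\to x$ this quotient is identically $0$; hence if $f_\theta$ were differentiable at $x$, its derivative would necessarily equal $0$. The whole strategy is therefore to produce a second approach sequence, this time of rationals $p_n/q_n\to x$, along which the quotient stays bounded away from $0$, contradicting the existence of a common limit.

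The rationals are furnished by Dirichlet's approximation theorem: for every irrational $x$ there are infinitely many reduced fractions $p/q$ satisfying $|x-p/q|<q^{-2}$. Since for each fixed denominator only finitely many fractions lie this close to $x$, the denominators occurring in such a family must be unbounded, so one may extract $p_n/q_n\to x$ with $q_n\to\infty$, ensuring a genuine approach. For these fractions $f_\theta(p_n/q_n)=q_n^{-\theta}$, and
\[
 \left| \frac{f_\theta(p_n/q_n)}{p_n/q_n - x} \right|
 = \frac{q_n^{-\theta}}{|p_n/q_n - x|}
 > \frac{q_n^{-\theta}}{q_n^{-2}}
 = q_n^{2-\theta} \ge 1,
\]
where the final inequality uses $\theta\le 2$ together with $q_n\ge 1$. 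Thus the rational difference quotients do not tend to $0$, which is incompatible with the vanishing limit forced by the irrational sequence, and so $f_\theta$ fails to be differentiable at $x$.

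The genuinely substantive ingredient is Dirichlet's theorem: it is precisely the quadratic quality $|x-p/q|<q^{-2}$ of the approximation that generates the exponent $2-\theta$ and thereby dictates the range $\theta\le 2$. I expect the boundary case $\theta=2$ to be the delicate point, since there the lower bound is merely the constant $1$ rather than a quantity that blows up; the routine but necessary check is that a fixed positive constant already suffices to exclude the limit $0$, which it does. For $\theta>2$ the estimate degenerates, consistent with the statement being restricted to $\theta\in(0,2]$, and the sharper result promised for the next section would require quantifying the quality of approximation through the exponent of irrationality rather than the crude bound $q^{-2}$.
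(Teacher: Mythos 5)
Your proof is correct and follows essentially the same route as the paper: the paper also contrasts a rational approximating sequence of quadratic quality (yielding difference quotients bounded below by a constant times $q^{2-\theta}$, hence away from $0$ for $\theta\le 2$) with the zero quotients along irrational sequences. The only difference is cosmetic — the paper invokes Hurwitz's theorem ($|x-p/q|<\frac{1}{\sqrt{5}\,q^2}$) where you use the more elementary Dirichlet bound $|x-p/q|<q^{-2}$, and the constant $\sqrt{5}$ plays no role in the argument; your explicit treatment of rational points appears in the paper as a remark preceding the proposition rather than inside its proof.
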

\begin{proof}
Let $x$ be an irrational number. By Hurwitz's theorem \cite{Hurwitz,Hardy:08,Bugeaud}, there exists a sequence $(x_j)_{j\in \N}$ of rational numbers converging to $x$, such that $x_j=p_j/j$ with $p_j\wedge j=1$ and $|x- x_j| < \frac{1}{\sqrt{5} j^2}$. Then
\[
 | \frac{f_\theta(x) - f_\theta(x_j)}{x- x_j}|
 > \frac{j^{-\theta}}{1/(\sqrt{5} j^2)}
 = \sqrt{5} j^{2-\theta}.
\]
This ensures that $f_\theta'(x)$ cannot be equal to zero. However, by irrational approximation, if $f_\theta'(x)$ exists, it must be zero.
\end{proof}
The differentiability in the case $\theta>2$ is considered in \cite{Beanland}, where a more general scenario is studied. Whether $f_\theta$ is differentiable at $x$ or not involves conditions on the irrationality exponent of $x$ (see Definition~\ref{def:irex}).

\section{Irrationality exponent}
To study the regularity of the Thomae function, we must first understand how well an irrational number can be approximated by rational numbers.
\begin{Def}\label{def:irex}
The irrationality exponent $\tau(x)$ of an irrational number $x$ is the supremum of the real numbers $\tau$ for which the inequality
\[
 |x - \frac{p}{q}| < \frac{1}{q^\tau} 
\]
has infinitely many solutions in non zero integers $p$ and $q$.
\end{Def}
For any irrational number $x$, we have $\tau(x)\ge 2$ and the irrationality exponent of almost every number $x$ (with respect to the Lebesgue measure) is equal to $2$ \cite{Hardy:08,Bugeaud}. Roth's theorem asserts that the irrationality exponent of any irrational algebraic number is exactly $2$ \cite{Roth,Hardy:08}. For example, one has $\tau(e)=2$ and $\tau(\pi)< 8.0161$ \cite{Hata}.

To examine the irregularity of the Thomae function, we will require the ``best'' rational approximation of an irrational number as discussed in \cite{Hardy:08}, for example. This can be achieved with the continued fraction expansion.
\begin{Def}
Let $(a_j)_{j\in \N}$ be a sequence of natural numbers. For any $n\in \N$, we define the finite continued fraction
\[
 [a_1,a_2,\dotsc, a_n] = \frac{1}{a_1+\frac{1}{a_2+ \frac{1}{\dotsb + \frac{1}{a_n}}}}
\]
and the infinite continued fraction $[a_1,a_2,\dotsc]= \lim_{n\to \infty} [a_1,a_2,\dotsc,a_n]$, provided that the previous limit exists.
\end{Def}
It can be shown that $[a_1,a_2,\dotsc]$ defines an irrational number of $(0,1)$, and that every irrational of $(0,1)$ can be uniquely represented as an infinite continued fraction \cite{Hardy:08,Bugeaud}.
\begin{Def}
The convergents of an irrational number $x\in (0,1)$ are the numbers $p_j/q_j= [a_1,a_2,\dotsc, a_j]$.
\end{Def}
The sequence of convergents $(p_j/q_j)_{j\in \N}$ converges to $x$. Moreover, if $\tau_j(x)$ denotes the number such that
\[
 |x- \frac{p_j}{q_j}|= \frac{1}{q_j^{\tau_j(x)}},
\]
we naturally have $\tau(x)= \limsup_j \tau_j(x)$.

\section{H\"older regularity}
As we aim to determine the H\"older exponent of the Thomae function at every point, we must first introduce this concept.
\begin{Def}\label{Def Holder spaces}
A locally bounded function $f$ defined on $\R$ belongs to the pointwise H\"older space $\Lambda^\alpha(x)$, with $x\in \R$ and $\alpha \geq 0$ if there exists a constant $C>0$ and a polynomial $P$ of degree less than $\alpha$ such that
\[
| f(x+h)-P(h)| \le C |h|^\alpha,
\]
for any $h$ in a neighborhood of the origin.
\end{Def}
It is easy to check that $\alpha<\alpha'$ implies $\Lambda^{\alpha'}(x) \subset \Lambda^{\alpha}(x)$.
\begin{Def}
The H\"older exponent of a locally bounded function $f$ defined on $\R$ at $x\in \R$ is defined by $H_f(x) = \sup\{ \alpha: f\in \Lambda^\alpha(x)\}$ 
\end{Def}
In other words, the H\"older exponent $H_f(x)$ is such that
\[
 f\in \bigcap_{0\le \alpha<H_f(x)}\Lambda^\alpha(x)
 \qquad\text{and}\qquad
 f\not\in \bigcup_{H_f(x)<\alpha\le 1}\Lambda^\alpha(x).
\]
The H\"older exponent offers deeper insight into the regularity of the function $f$ at $x$. If $H_f(x)\in (0,1)$, the function $f$ is continuous but not differentiable at $x$. Conversely, if $H_f(x)>1$, the function $f$ is differentiable at $x$.

Remark that, by irrational approximation, the polynomial involved in Definition \ref{Def Holder spaces} must be zero:
\begin{Lem}
Let $\theta, \alpha > 0$ and $x \in (0,1) \setminus \Q$. If $f_\theta \in \Lambda^\alpha(x)$, then the polynomial $P$ of degree less than $\alpha$ 
appearing in Definition~\ref{Def Holder spaces} must necessarily be the zero polynomial.\end{Lem}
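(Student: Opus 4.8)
The plan is to exploit that $f_\theta$ vanishes at every irrational, together with the elementary fact that adding a nonzero rational to an irrational yields an irrational. Taking $h_n = 1/n$, the point $x+h_n$ is irrational for every $n$ (since $x\notin\Q$ while $1/n\in\Q$), so $f_\theta(x+h_n)=0$ and $h_n\to 0$. Feeding these shifts into the defining inequality of $\Lambda^\alpha(x)$ — which holds for all $h$ in a neighborhood of the origin, hence for $h_n$ once $n$ is large enough — collapses it into a bound on the polynomial alone:
\[
|P(h_n)| = |f_\theta(x+h_n)-P(h_n)| \le C|h_n|^\alpha = C\,n^{-\alpha}.
\]

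Next I would show that this forces $P$ to vanish identically. Suppose $P\not\equiv 0$ and write $P(h)=\sum_{k=m}^{d} c_k h^k$, where $c_m\neq 0$ is the lowest-order nonzero coefficient and $d=\deg P$; the hypothesis $\deg P<\alpha$ gives $m\le d<\alpha$. As $h_n\to 0$ the lowest-degree monomial dominates, since factoring out $h_n^m$ leaves a factor tending to $1$, so $|P(h_n)|\sim |c_m|\,|h_n|^m$ and therefore
\[
\frac{|P(h_n)|}{|h_n|^\alpha} \sim |c_m|\, n^{\alpha-m} \to +\infty,
\]
because $\alpha-m>0$. This contradicts the uniform bound by $C$ obtained above. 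Hence no such $c_m$ exists, i.e. $P\equiv 0$.

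The argument is short because the only structural input is the vanishing of $f_\theta$ on a sequence of rational shifts of $x$, which reduces the H\"older estimate to a purely polynomial inequality. The single point requiring a little care — the closest thing to an obstacle — is justifying that the lowest-degree term governs the asymptotics of $P(h_n)$; this is where the strict inequality $\deg P<\alpha$ (rather than $\le\alpha$) is genuinely used, guaranteeing $m<\alpha$ and hence the divergence above. Notably, no hypothesis on the irrationality exponent $\tau(x)$ is needed here, as any sequence of rational perturbations tending to $0$ suffices.
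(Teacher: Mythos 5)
Your proof is correct and follows essentially the same route as the paper: you test the H\"older inequality along the rational shifts $h_n=1/n$, use that $x+1/n$ is irrational so $f_\theta(x+1/n)=0$, and reduce the problem to the polynomial bound $|P(1/n)|\le C n^{-\alpha}$. The only cosmetic difference is the final step, where you invoke dominance of the lowest-order nonzero coefficient in one stroke instead of the paper's coefficient-by-coefficient induction; both arguments are equivalent in substance.
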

\begin{proof}
By hypothesis, there exists a constant $C>0$ such that
\[
 |f_\theta(x+h)-P(h)| \le C |h|^\alpha,
\]
for all $h$ in a neighborhood of the origin. Suppose that $P(h)=\sum_{k=0}^m a_k h^k$ with $a_m\ne 0$ and $m<\alpha$. For all $n\in\N$, one has
\begin{align*}
 \frac{|f_\theta(x+1/n)-P(1/n)|}{|1/n|^\alpha}=n^\alpha |P(1/n)|.
\end{align*}
Since the right-hand side must remain bounded as $n\to \infty$, it follows that $a_0=0$. Next,
\begin{align*}
 \frac{|f_\theta(x+1/n)-P(1/n)|}{|1/n|^\alpha}
 =n^{\alpha-1} |n P(1/n)|,
\end{align*}
which implies $a_1=0$. Proceeding inductively in this manner, we find that $P(h)=a_m h^m$ and
\begin{align*}
\frac{|f_\theta(x+1/n)-P(1/n)|}{|1/n|^\alpha}=n^{\alpha-m} a_m,
\end{align*}
so that $P$ must in fact be the zero polynomial.
\end{proof}

\begin{Pro}
For $\theta>0$, the H\"older exponents of the Thomae function $f_\theta$ are given by
\[
 H_f(x) = \left\{\begin{tabular}{ll}
  $0$ & if $x$ is rational \\
  $\theta/\tau(x)$ & if $x$ is irrational
 \end{tabular}\right..
\]
\end{Pro}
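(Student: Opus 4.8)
The plan is to translate membership in the H\"older space into a statement about rational approximation and to treat the rational and irrational points separately. The key reduction comes from the preceding Lemma: at an irrational $x$ with $\alpha>0$, the polynomial $P$ vanishes, so $f_\theta\in\Lambda^\alpha(x)$ amounts to the existence of $C>0$ with $|f_\theta(x+h)|\le C|h|^\alpha$ for all small $h$. Since $f_\theta$ vanishes at irrational increments, only the rationals $x+h=p/q$ contribute, where the inequality becomes $q^{-\theta}\le C\,|x-p/q|^\alpha$; the whole analysis then hinges on comparing the denominator $q$ with the error $|x-p/q|$, which is precisely what $\tau(x)$ measures.

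For a rational $x=p/q$ I would argue directly (the Lemma does not apply). If $f_\theta\in\Lambda^\alpha(x)$ for some $\alpha>0$, with polynomial $P$, then setting $h=0$ forces $P(0)=f_\theta(x)=q^{-\theta}>0$, whereas letting $h\to0$ through irrationals gives $|P(h)|\le C|h|^\alpha\to0$ and hence $P(0)=0$, a contradiction. Thus $f_\theta\notin\Lambda^\alpha(x)$ for every $\alpha>0$, so $H_f(x)=0$.

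Now fix an irrational $x$ and set $\beta=\theta/\tau(x)$ (reading $\beta=0$ if $\tau(x)=\infty$). For the bound $H_f(x)\le\beta$ I would take any $\alpha>\beta$, choose $\tau$ with $\theta/\alpha<\tau<\tau(x)$, and invoke Definition~\ref{def:irex} to get infinitely many rationals with $|x-p/q|<q^{-\tau}$, necessarily with $q\to\infty$ and so $h\to0$. Along these,
\[
 \frac{|f_\theta(x+h)|}{|h|^\alpha}=\frac{q^{-\theta}}{|x-p/q|^\alpha}>q^{\alpha\tau-\theta}\longrightarrow\infty ,
\]
since $\alpha\tau>\theta$, ruling out any constant $C$ and giving $f_\theta\notin\Lambda^\alpha(x)$. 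For the reverse bound $H_f(x)\ge\beta$ I would take any $\alpha<\beta$, so that $\tau':=\theta/\alpha>\tau(x)$. By Definition~\ref{def:irex} only finitely many rationals satisfy $|x-p/q|<q^{-\tau'}$; being finitely many points distinct from $x$, they can be excluded by a small enough neighborhood, on which every rational increment obeys $|x-p/q|\ge q^{-\theta/\alpha}$ and hence $|f_\theta(x+h)|=q^{-\theta}\le|h|^\alpha$. With irrational increments giving $0$, this shows $f_\theta\in\Lambda^\alpha(x)$ with $C=1$. Letting $\alpha\uparrow\beta$ and combining the two inequalities yields $H_f(x)=\theta/\tau(x)$.

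The step I expect to be the main obstacle is the lower bound, where one must ensure that discarding the finitely many ``too good'' approximations by shrinking the neighborhood really produces a uniform estimate $|x-p/q|\ge q^{-\theta/\alpha}$ valid for all remaining rational increments, including those of small denominator, so that a single constant suffices and the argument is not merely asymptotic.
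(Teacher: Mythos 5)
Your proof is correct, and the step you flagged as the main obstacle does go through: once $\alpha<\theta/\tau(x)$, the exponent $\tau'=\theta/\alpha$ strictly exceeds $\tau(x)$, so Definition~\ref{def:irex} leaves only finitely many pairs $(p,q)$ with $|x-p/q|<q^{-\tau'}$; the corresponding rationals, together with the integer points (note that $0$, where $f_\theta$ equals $1$, is exempted by the requirement $p\neq 0$ in the definition, but there are finitely many such points in any bounded set), all lie at positive distance from the irrational $x$, so a small enough punctured neighborhood removes them and the bound $|f_\theta(x+h)|\le|h|^\alpha$ holds uniformly, small denominators included. One cosmetic repair: Definition~\ref{def:irex} does not impose $p\wedge q=1$, so in the upper bound the equality $f_\theta(p/q)=q^{-\theta}$ should be the inequality $f_\theta(p/q)\ge q^{-\theta}$ (reducing to lowest terms only decreases the denominator), which is all your blow-up argument needs.

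Your route differs from the paper's in how both directions are implemented. For rational $x$ the paper simply invokes discontinuity, which your polynomial argument reproves from scratch. For the upper bound (non-membership above $\theta/\tau(x)$), the paper works with the continued-fraction convergents $p_j/q_j$ and the relation $\tau(x)=\limsup_j \tau_j(x)$, whereas you use only the supremum definition of $\tau(x)$ (every $\tau<\tau(x)$ admits infinitely many solutions); your version is more self-contained, bypassing the continued-fraction machinery of the third section entirely. The substantive difference is in the membership direction: you prove $f_\theta\in\Lambda^\alpha(x)$ for every $\alpha$ strictly below $\theta/\tau(x)$ and take a supremum, while the paper aims at the stronger endpoint statement $f_\theta\in\Lambda^{\theta/\tau(x)}(x)$, via a uniform estimate $|x-m/n|^{-1}\le n^{\tau(x)+\varepsilon}$ valid for $n\ge N(\varepsilon)$. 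That endpoint membership is precisely what the concluding remarks rely on (the Thomae function attains its H\"older exponent without logarithmic correction, contrary to the prevalent situation), and your argument, whose constant-$1$ bound is available only strictly below the exponent, does not recover it; on the other hand, the endpoint property is genuinely delicate, since it depends on finer approximation behavior of $x$ than the single number $\tau(x)$, so your more modest conclusion is also the more robust one. For the proposition as stated, namely the value of $H_f(x)$, both arguments suffice.
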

\begin{proof}
Since $f_\theta$ is discontinuous at rational points, we may assume that $x \in (0,1)$ is irrational.

If $y \in (0,1)$ is also irrational, then naturally $f_\theta(x) - f_\theta(y) = 0$. Given $\varepsilon > 0$, there exists $N(\varepsilon) \in \N$ such that, for all $n \ge N(\varepsilon)$ and all $m$ coprime with $n$,
\[
 \frac{1}{|x-m/n|} \le n^{\tau(x)+\varepsilon}.
\]
Note that the function $\varepsilon\mapsto N(\varepsilon)$ is non-increasing. Hence, for all $n \ge N(1)$ and all $\varepsilon \in (0,1)$,
\[
 \frac{| f_\theta(x) -f_\theta(m/n) |}{|x-m/n|^{\theta/\tau(x)}}
 = \frac{n^{-\theta}}{|x-m/n|^{\theta/\tau(x)}}
 \le \frac{n^{\theta(\tau(x)+\varepsilon )/\tau(x)}}{n^\theta}
 \le n^{\varepsilon \theta/\tau(x)}.
\]
It follows that, for all $n \geq N(1)$,
\[
 \frac{| f_\theta(x) -f_\theta(m/n) |}{|x-m/n|^{\theta/\tau(x)}} \le 1,
\]
so that $f_\theta\in \Lambda^{\theta/\tau(x)}(x)$.

Now, let $\varepsilon > 0$ and choose $\eta\in \big(0,\frac{\tau(x)\varepsilon}{\frac{\theta}{\tau(x)}+\varepsilon}\big)$. If $p_j/q_j$ are the convergents of $x$, then, for sufficiently large $j$, we have
\[
 \frac{| f_\theta(x) -f_\theta(p_j/q_j) |}{|x-p_j/q_j|^{\frac{\theta}{\tau(x)}+\varepsilon}}
 = \frac{q_j^{-\theta}}{|x-p_j/q_j|^{\frac{\theta}{\tau(x)}+\varepsilon}}
 \ge \frac{q_j^{(\tau(x)-\eta)(\frac{\theta}{\tau(x)}+\varepsilon)}}{q_j^\theta}
 = q_j^{\beta_\varepsilon},
\]
for some $\beta_\varepsilon>0$. As $q_j\to \infty$, we deduce that $f_\theta \not\in \Lambda^{\frac{\theta}{\tau(x)}+\varepsilon}(x)$.
\end{proof}

\section{Remarks and final observations}
Let us make a few observations. The equality $\alpha= H_f(x)$ does not necessarily imply that $f\in \Lambda^\alpha (x)$. In fact, from the prevalent point of view, most functions $f$ do not belong to $\Lambda^{H_f(x)}(x)$, as a logarithmic correction is required \cite{Loosveldt:22}. However, the previous proof demonstrates that the Thomae function does belong to such a space.

For $\theta < 2$, we recover the classical result that $f_\theta$ is nowhere differentiable. The function $f_2$ is likewise nowhere differentiable, yet its H\"older exponent equals $1$ almost everywhere. For $\theta > 2$, $f_\theta$ is differentiable at a point $x$ whenever $\tau(x) < \theta$. For instance, $f_9$ is differentiable at irrational algebraic numbers, as well as at $e$, $\pi$, $\pi^2$, and $\ln(2)$.

The H\"older behavior of $f_\theta$ closely resembles that of the Brjuno function \cite{Martin}, which is unsurprising given that both functions are defined via rational approximations, see also \cite{Jaffard97}.

The H\"older spectrum of a function $f$ is defined as the map
\[
 h \mapsto \dim \{x : H_f(x) = h\},
\]
where $\dim$ denotes the Hausdorff dimension (with the convention that $\dim(\emptyset)$ is $-\infty$). Since Jarn\'ik's theorem asserts that $\dim \{x : \tau(x) = t\} = 2/t$ (see, e.g., \cite{Jaffard}), the H\"older spectrum of the Thomae function $f_\theta$ is therefore given by
\[
 h\mapsto \left\{\begin{tabular}{ll}
 $2h/\theta$ & if $h\in [0,\theta/2]$ \\
 $-\infty$ & else
 \end{tabular}\right..
\]

For $\theta < 0$, the function $f_\theta$ is nowhere locally bounded. However, unlike the Brjuno function, the so-called $p$-exponents fail entirely when applied to $f_\theta$, since its integral vanishes \cite{Leonarduzzi,Martin,Loosveldt21}.

The function $f_\theta$ studied here can be generalized using the concept of Boyd functions \cite{Loosveldt21}. A function $\phi: (0,1]\to (0,\infty)$ is called a Boyd function if it is continuous and
\[
 0<\underline{\phi}(x):=\inf_{s\leq 1}\frac{\phi(xs)}{\phi(s)}
 \leq\overline{\phi}(x):=\sup_{s\leq 1}\frac{\phi(xs)}{\phi(s)}<\infty
\]for any $t< 1$. The \emph{lower} and \emph{upper indices} of $\phi$ are defined by
\[
\underline{s}(\phi) = \lim_{x \to 0} \frac{\log \underline{\phi}(x)}{\log x} \quad \text{and} \quad \overline{s}(\phi) = \lim_{x \to 0} \frac{\log \overline{\phi}(x)}{\log x}.
\]
respectively. From the definition, it is straightforward to verify that the Boyd indices satisfy $\underline{s}(\phi)\le \overline{s}(\phi)$. Furthermore, for any $\varepsilon>0$, there exists a constant $C>0$ such that
\begin{equation*}
C^{-1} x^{\overline{s}(\phi)+\varepsilon} \le \phi(x) \le C x^{\underline{s}(\phi)-\varepsilon},
\end{equation*}
for all $t\leq 1$. These relationships enable us to adapt the preceding results to the function
\[
 f_\phi(x) = \left\{\begin{tabular}{ll}
  $1$ & if $x=0$ \\
  $\phi(1/q)$ & if $x=p/q$ \\
  $0$ & if $x$ is irrational
 \end{tabular}\right.,
\]
where $\underline{s}(\phi)= \overline{s}(\phi)=\theta>0$. For example, we can consider the Boyd function $\phi(x)=x^\theta (|\ln x|+1)^\gamma$ for some $\gamma>0$. If the indices $\underline{s}(\phi)$ and $\overline{s}(\phi)$ differ, the regularity can only be bracketed between upper and lower estimates.
\begin{figure}[h!]
\begin{center}
\includegraphics[scale=.3]{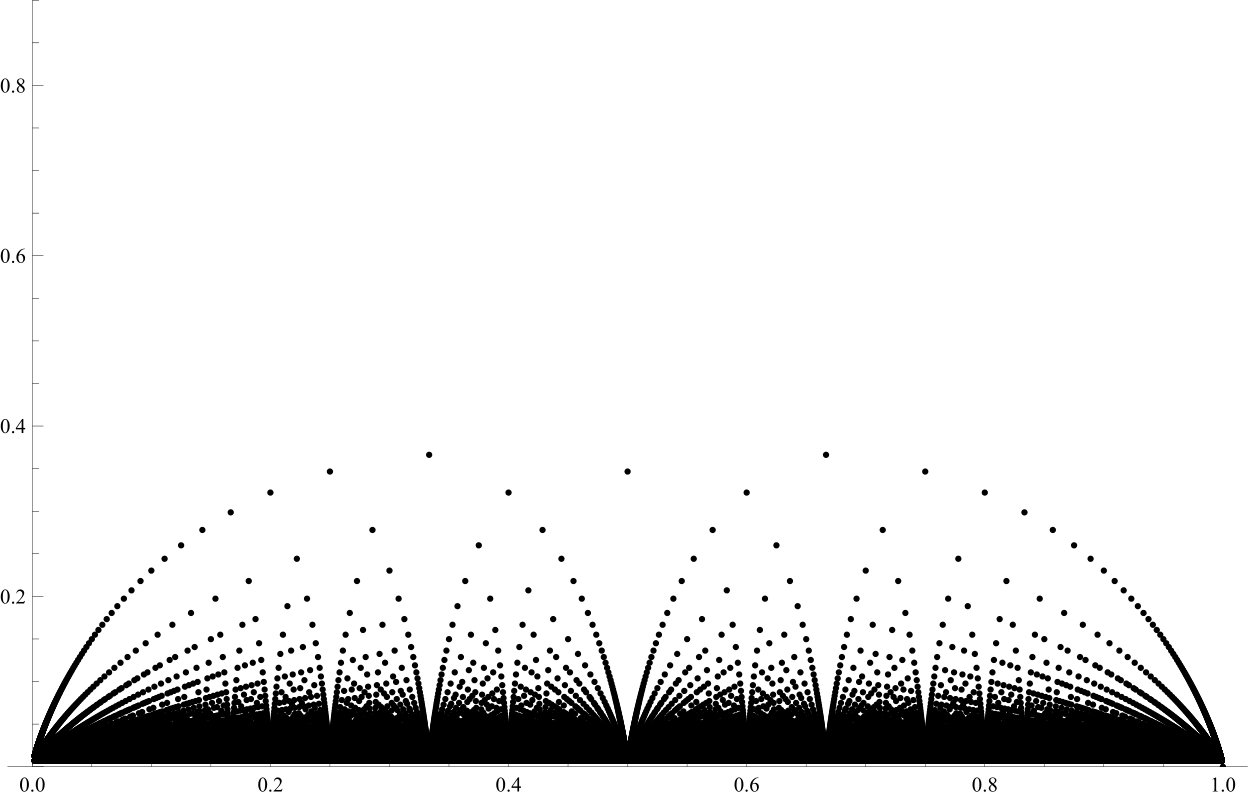}
\end{center}
\caption{Representation of the function $f_\phi$ on $(0,1)$ with $\phi(x)=t\ln(1/x)$.}
\end{figure}

In summary, the Thomae family offers a remarkably explicit example of a multifractal hierarchy governed by arithmetic properties of the real line. Its regularity, entirely determined by the irrationality exponent, illustrates the subtle interplay between number-theoretic structure and analytic smoothness. While the usual Hölder framework captures this behavior precisely, the $p$-exponents appear less adapted to such highly singular constructions, as their definition becomes degenerate in this context.

\bibliography{thomae}{}
\bibliographystyle{plain}
\end{document}